\title{Detecting and correcting the loss of independence in
nonlinear conjugate gradient\thanks{Supported
in part by a grant from the U.~S.~Air Force Office of Scientific Research and in part
by a Discovery Grant from the Natural Sciences and Engineering Research Council
(NSERC) of Canada.}}
\author{Sahar Karimi\thanks{Department of Combinatorics \& Optimization,
University of Waterloo, 200 University Ave.~W., Waterloo, ON, N2L 3G1,
Canada, {\tt s2karimi@uwaterloo.ca}.} \and 
Stephen Vavasis\thanks{Department of Combinatorics \& Optimization,
University of Waterloo, 200 University Ave.~W., Waterloo, ON, N2L 3G1,
Canada, {\tt vavasis@uwaterloo.ca.}}}
\newtheorem{alg}{Algorithm}
\renewcommand\a{{\bf a}}
\renewcommand\b{{\bf b}}
\renewcommand\c{{\bf c}}
\renewcommand\d{{\bf d}}
\newcommand\g{{\bf g}}
\newcommand\q{{\bf q}}
\renewcommand\r{{\bf r}}
\newcommand\R{{\bf R}}
\newcommand\s{{\bf s}}
\newcommand\x{{\bf x}}
\newcommand\y{{\bf y}}
\newcommand\lt{\left}
\newcommand\rt{\right}
\newcommand\bdelta{\mbox{\boldmath{$\delta$}}}
\newtheorem{lemma}{Lemma}
\newtheorem{theorem}{Theorem}
\newcommand{\eref}[1]{$(\ref{#1})$}
\newenvironment{proof}{\noindent {\bf Proof. }}{\hfill $\square$}
\begin{document}
\maketitle
\begin{abstract}
It is well known that search directions in nonlinear conjugate gradient (CG) can
sometimes become nearly dependent, causing a dramatic slow-down in
the convergence rate.  
We provide a theoretical analysis of this loss of
independence.  The analysis applies to the case of a strictly convex
objective function and is motivated by older work of Nemirovsky and Yudin. Loss of
independence can affect several of the well-known variants of
nonlinear CG including Fletcher-Reeves, Polak-Ribi\`ere (nonnegative
variant), and Hager-Zhang.

Based on our analysis, we propose a relatively inexpensive
computational test for detecting loss of independence.  We also
propose a method for correcting it when it is detected, which we call
``subspace optimization.''  Although the correction method is somewhat
expensive, our experiments show that in some cases, usually the most
ill-conditioned ones, it yields a method
much faster than any of these three variants. 
Even though our theory covers only strongly
convex objective functions, we provide computational results to
indicate that the detection and correction mechanisms may also hold
promise for nonconvex optimization.
\end{abstract}


\section{Conjugate gradient}
The method of conjugate gradients  (CG)
was introduced by Hestenes and Stiefel \cite{hestenesstiefel}
for 
minimizing convex quadratic functions.  
We refer to this
algorithm as ``linear conjugate gradient.''
It was soon generalized
by Fletcher and Reeves \cite{fletcherreevescg} 
and Polak and Ribi\`ere \cite{polakribiere},
to the general problem of unconstrained minimization, i.e., 
\begin{equation}\label{probdef}
\min_{\x \in \mathbb R^n} f\left (\x \right).
\end{equation}
However, the theoretical basis for
nonlinear CG is considerably weaker than that of the linear
case.  In the linear case, the successive gradients are mutually orthogonal
and the search directions are mutually conjugate; these facts allow several
strong convergence proofs including finite termination, convergence bounded
in terms of problem condition number, and superlinear convergence \cite{golubvanloanbook}.
Indeed, the only thing that can go awry is loss of orthogonality due to roundoff
error.  Roundoff error can indeed be a significant problem in practice but is
not the main topic of our current study (although see Section~\ref{sec:divdiff}).

In the case of nonlinear conjugate gradient, there is no 
orthogonality of the search directions, and in fact, the
directions can become nearly dependent.  It is generally accepted in the
optimization community (and confirmed by our own experiments described
in Section~\ref{sec:comput}) that the Polak-Ribi\`ere variant is more
robust against dependent search directions than the Fletcher-Reeves
variant; see Nocedal and Wright \cite{nocedalwrightbook} for a discussion of this
issue.

The standard technique to combat loss of independence is restarting
the method, i.e., occasionally taking a step of pure steepest descent.
However, there is little rigorous theory that explains when to restart
the method.  The best known rigorous result in this direction is a
proof that when an iterate is sufficiently close to the root, if one
restarts every $n$ iterations, one is guaranteed $n$-step quadratic
convergence to the optimizer.  Here, $n$ denotes the number of
variables.  This result is unsatisfying for at least two reasons.  First,
there is no apparent method to detect when an iterate is sufficiently
close to the root in order to apply this theorem.  Second, restarting
every $n$ iterations does not seem to be practically motivated.  The
reason is that the convergence of conjugate gradient, both linear
and nonlinear, is much more
closely tied to the conditioning of the problem than to $n$, the
number of variables.  Thus, one would apparently prefer a rigorously
supported restart strategy that is condition-dependent rather problem
size-dependent.

In this paper we turn in Section~\ref{sec:NYanalysis}
to a decades-old analysis of a variant of
conjugate gradient by Nemirovsky and Yudin 
\cite{nemiryudinbook} that is intended
for the case of a strongly convex objective function.  
We will argue
in Section~\ref{sec:detection} that
the analysis of their algorithm suggests a rigorous
way, at least for this class of objective functions, to detect loss of
independence in the search directions.  Armed with this knowledge, we
are then able to propose a method for correcting loss of independence,
which is described in Section~\ref{sec:correction}.

The detection procedure is relatively cheap; the correction procedure,
however, is quite expensive.  Nonetheless, nonlinear conjugate
gradient (any variant) augmented by our correction procedure in
practice is sometimes the fastest method for solving
the problem, according to our experiments detailed in Section~\ref{sec:comput}.
Furthermore, if the correction procedure is used, then one obtains a
theoretical bound on the number of iterations that is the same as
Nemirovsky and Yudin's and is the best possible convergence bound
known to date (although we do not achieve their bound on function/gradient
evaluations; see further remarks below).  
In contrast, there is no comparable convergence bound known 
for any of the standard CG methods.
Indeed, Nemirovsky and Yudin
argue that their worst-case complexity for strongly convex
functions is quite poor.
A strength of our proposed correction method is that it requires no prior knowledge
of parameters of the underlying function, unlike most methods that
achieve the theoretical convergence bound.

Methods reviewed in this paper are among techniques that are generally
referred to as ``first-order algorithms'' because they use only the
first derivative information of the function in each iteration. Due to
the successful theory developed first by
Nemirovsky and Yudin and extended by Nesterov, first-order
algorithms have attracted many researchers during the last decade and
have been extended to solving different classes of problems. Nesterov
in \cite{nesterov2005} proposed a variation of his earlier algorithms
for minimizing a nonsmooth function. In addition to nonsmooth
optimization, Nesterov's algorithm has been adapted for constrained
problems with simple enough feasible regions so that a projection on
these sets can be easily computed. One may refer to \cite{tseng2008}
and references therein for a more in-depth discussion of different
adaptations of Nesterov's algorithm. The focus of this paper, however,
is more on the CG algorithm and not on first-order techniques in general.

Hestenes and Stiefel's original linear CG has the following form:
\begin{subequations}
\begin{equation}\label{xiteratel}
\x^{j+1}=\x^j - \frac{\left(\r^j\right)^t\d^j}{\left(\d^j\right)^tA\d^j}\d_j,
\end{equation}
\begin{equation}\label{diteratel}
\d^{j+1}=-\r^{j+1}+\frac{\left(\r_{j+1}\right)^tA\d^j}{\left(\d^j\right)^tA\d_j}\d_j.
\end{equation}
\end{subequations}
In the above equations $\r^j$ is $\nabla f(\x)=A\x^j-\b$ and
$\d^0=-\r^0$. It is possible to show that the number of iterations in linear CG is bounded by the dimension of the problem, $n$. For more
details on linear CG, one may refer to \cite{golubvanloanbook} or
\cite{nocedalwrightbook}.

Nonlinear CG was proposed by Fletcher and
Reeves \cite{fletcherreevescg} as an adaptation of the above algorithm
for minimizing a general nonlinear function. The general form of this
algorithm is as follows:
\begin{subequations}
\begin{equation}\label{xiteratenl}
\x^{j+1}=\x^j+\alpha^j \d^j,
\end{equation}
\begin{equation}\label{diteratenl}
\d^{j+1}=-\g^{j+1}+\beta^{j}\d^j.
\end{equation}
\end{subequations}

Here, $\d^j$ is the {\em search direction} at each iteration, $\g^{j+1}$
is the gradient of the function at $(j+1)$th iterate, i.e., $\nabla
f(\x^{j+1})$; and $\alpha^j$ is the step size, usually determined by a
line search. Different updating rules for $\beta^j$ give us different
variants of nonlinear CG. The most common formulas for computing $\beta^j$
are:

\begin{equation*}
\begin{array}{ll}
\text{Fletcher-Reeves (1964):}&  \beta_{FR}=\frac{\| \g^{j+1}\|}{\| \g^{j}\|}, \\
\text{Polak-Ribi\`ere (1969):} & \beta_{PR}=\frac{\left( \g^{j+1}\right)^t\left(\g^{j+1}-\g^j\right)}{\| \g^{j}\|}.
\end{array}
\end{equation*}
Hager and Zhang \cite{zhangsurvey} present a complete list of all
updating rules in their survey on nonlinear CG. The convergence of nonlinear CG is
highly dependent on the line search; for some, the exact line search is
crucial. There are numerous papers devoted to the study of global
convergence of nonlinear CG algorithms, most of which discuss variants of nonlinear CG
that do not rely on exact line search to be globally
convergent. Al-Baali \cite{albaalipaper} shows the convergence of
Fletcher-Reeves algorithm with inexact line search. Gilbert and Nocedal
\cite{gilbertnocedalpaper} establish the convergence of 
a variant of the Polak-Ribi\'ere 
nonlinear CG algorithm
with no restart and no exact line search. Dai and Yuan
\cite{daiyuanpaper} present a nonlinear CG for which the standard Wolfe
condition suffices. A recent variant of CG has been proposed by Hager
and Zhang \cite{hagerzhangdescentcg} that relies on a line search
satisfying the Wolfe Conditions. Furthermore this algorithm has the
advantage that every search direction is a descent direction, which is
not necessarily the case in nonlinear CG.

{From} Yuan and Stoer's perspective \cite{yuanstoer}, CG is a technique
in which the search direction $\d^{j+1}$ lies in the subspace spanned by 
$\text{\emph {Sp}}\{\g^{j+1}, \d^j\}$. In the algorithm they propose they compute
the new search direction by minimizing a quadratic approximation of
the objective function over the mentioned subspace. A more generalized
form of CG called Heavy Ball Method, was introduced by Polyak
\cite{polyakbook}, in which $\x^{j+1}$ is $\x^j+\alpha (-\g^{j}) + \beta
(\x^j-\x^{j-1})$. He proved a geometric progression rate for this
algorithm when $\alpha$ and $\beta$ belong to a specific range.

\section{An analysis of the loss of independence}
\label{sec:NYanalysis}
The analysis in this section focuses on strongly convex objective
functions.   We say that
$f$ is {\em strongly convex with parameters $(L,l)$} if
for any $\x,\y$ lying in the level set of $\x^0$,
\begin{eqnarray}
\| \nabla f(\x) -\nabla f(\y)\| &\le & L \| \x-\y \|, 
\label{introeq1}
\\
f(\y) -f(\x) &\ge& \left\langle  \nabla f(\x), \y-\x \right\rangle + \frac{l}{2} \| \y-\x\|^2.
\label{introeq2}
\end{eqnarray}
For example, in the case of a convex quadratic function
$f(\x)=\x^tA\x/2-\b^t\x$ where $A\in\R^{n\times n}$ is
symmetric positive definite, $L/l$ is the condition number of $A$.
{From} inequality \eqref{introeq1}, it follows that
\begin{equation}\label{introeq3}
f(\y)-f(\x) \le\left\langle  \nabla f(\x), \y-\x \right\rangle + \frac{L}{2} \| \y-\x\|^2,
\end{equation} 
which will be useful in our analysis.
We follow the standard notation throughout
this paper: $\langle \cdot , \cdot \rangle$ represents the inner product of
two vectors in proper dimension, and $\| \cdot \|$ stands for the 2-norm
of a vector unless otherwise is stated. Bold lower case characters and
upper case characters are used for vectors and matrices respectively;
and their superscript states the iteration count.

In \cite{nemiryudinbook}, Nemirovsky and Yudin propose
an algorithm for minimizing a strongly convex $f$ that
achieves a worst-case complexity bound of
$O(\ln(1/\epsilon)\sqrt{L/l})$. Here $\epsilon$ is the desired relative
accuracy, that is,
$\epsilon=(f(\x^n)-f(\x^*)) / (f(\x^0)-f(\x^*))$, where
$\x^0$ is the starting point, $\x^*$ is the optimizer, and
$\x^n$ is the final iterate.  This bound is still the best known for
this particular class of methods and functions.  Their algorithm can
be regarded as a variant of conjugate gradient.  

The NY algorithm has never been widely used in practice for several
reasons.  First, when applied to convex quadratic functions, it does
not reduce to linear conjugate gradient and in fact can be much
slower.  (In contrast, the FR and PR variants of nonlinear CG reduce
to linear CG in the case of a convex quadratic and if an exact line
search is used.  Many would argue that this is a defining property of
nonlinear conjugate gradient.)  Second, the method requires an
expensive subspace optimization step on every iteration.  Our
correction procedure involves a related subspace optimization; we
comment on its cost in Section~\ref{sec:correction}.  A later paper
by Nesterov \cite{nesterov1983} remedied this
drawback by achieving the same complexity
without the need for subspace optimization.  Third, the Nemirovsky-Yudin
algorithm requires prior knowledge of $L/l$, which may not be available
in practice.  Furthermore, for some classes of problems, e.g.,
log-barrier functions, the upper bound on $L/l$ varies wildly
depending on the choice of starting point.  Therefore, we would much
prefer methods that do not require prior knowledge of such 
parameters.\footnote{Subsequent to the release
of an earlier draft of the present manuscript,
Nesterov \cite{nesterov2013} also considered the issue of
optimal methods that do not require prior knowledge of parameters.}

For convenience, let us represent the gradient at $\x^j$, that is,
$g(\x^j)$, by $\g^j$, and let $v_f(\x)$ denote the residual of the function, i.e. $f(\x)-f(\x^*)$.
Our main lemma requires the following three properties:

\begin{enumerate}[(a)]
\item $f(\x^{j+1})\le f(\x^j)-\frac{1}{2L}\| \g^j\|^2$\label{prop1}
\item $\left\langle \g^j, \x^*-\x^j\right\rangle \le f(\x^*)- f(\x^j)$\label{prop2}
\item $v_f(\x^0)=f(\x^0)-f^*\ge \frac{l}{2} \Vert \x^*-\x^0\Vert^2$\label{prop3}
\end{enumerate}

Property \eqref{prop1} assumes that the step computed by the algorithm
is at least as good as steepest descent with fixed step length of $1/L$.
Property \eqref{prop2} is true by convexity of the function, 
and property \eqref{prop3} is a direct derivation from inequality  \eqref{introeq2}. 
We are now ready to present the main lemma that yields
a complexity bound for conjugate gradient.

 \begin{lemma}\label{lem1} Consider applying nonlinear conjugate gradient
(any variant) to strongly convex function $f(\x)$.  Assume that
the step at each iteration satisfies \eref{prop1}.
Furthermore, 
suppose $m\ge \lt\lceil8\rho\sqrt{\frac{L}{l}}\rt\rceil$ and 
 \begin{equation}\label{eq1} \frac{\left(f(\x^{m-1})-f(\x^0)\right)}{4} 
\left(\sum_{j=0}^{m-1} \lambda^j\right) + 
\sum_{j=0}^{m-1} \lambda^j \left\langle \g^j,\x^j-\x^0\right\rangle <0,
\end{equation}  
 and 
 \begin{equation}\label{eq2} \left\| \sum_{j=0}^{m-1} \lambda^j\g^j\right\| 
\le \rho \sqrt{ \sum_{j=0}^{m-1} \left(\lambda^j\right)^2 \left\| \g^j\right\|^2}, 
\end{equation}
are satisfied, where $\rho$ is a constant $\ge 1$, and $$\lambda^j=\sqrt{\frac{f(\x^j)-f(\x^{j+1})}{\| \g^j\|^2}}.$$ Then the residual of the function is divided in half after $m$ iterations; i.e. $v_f(\x^m)\le \frac{1}{2} v_f(\x^0)$.
\end{lemma}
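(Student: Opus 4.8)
The plan is to argue by contradiction: suppose instead that $v_f(\x^m) > \tfrac12 v_f(\x^0)$, and derive from \eqref{eq1}, \eqref{eq2} and properties (a)--(c) that $m$ must be strictly smaller than $8\rho\sqrt{L/l}$, contradicting the hypothesis $m\ge\lt\lceil 8\rho\sqrt{L/l}\rt\rceil$. Two elementary facts anchor the argument. First, the definition of $\lambda^j$ yields the telescoping identity
\[
\sum_{j=0}^{m-1}(\lambda^j)^2\|\g^j\|^2=\sum_{j=0}^{m-1}\bigl(f(\x^j)-f(\x^{j+1})\bigr)=v_f(\x^0)-v_f(\x^m),
\]
so the right-hand side of \eqref{eq2} equals $\rho\sqrt{v_f(\x^0)-v_f(\x^m)}\le\rho\sqrt{v_f(\x^0)}$. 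Second, property (a) forces $(\lambda^j)^2\ge 1/(2L)$, hence $S:=\sum_{j=0}^{m-1}\lambda^j\ge m/\sqrt{2L}$; this is where the iteration count $m$ enters the final estimate.

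The core of the proof is a lower bound on $\|G\|$, where $G:=\sum_{j=0}^{m-1}\lambda^j\g^j$ is the weighted gradient sum appearing in \eqref{eq2}. I would obtain it by weighting property (b) by $\lambda^j\ge0$, summing over $j$, and splitting $\x^*-\x^j=(\x^*-\x^0)-(\x^j-\x^0)$ to isolate $\langle G,\x^*-\x^0\rangle$:
\[
\langle G,\x^*-\x^0\rangle\le-\sum_{j=0}^{m-1}\lambda^j v_f(\x^j)+\sum_{j=0}^{m-1}\lambda^j\langle\g^j,\x^j-\x^0\rangle.
\]
Here the two hypotheses do exactly the work one wants. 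Hypothesis \eqref{eq1} bounds the last sum above by $\tfrac14\bigl(v_f(\x^0)-v_f(\x^{m-1})\bigr)S$, while monotonicity of the residual ($v_f(\x^j)\ge v_f(\x^{m-1})$ for $j\le m-1$) bounds $-\sum\lambda^j v_f(\x^j)\le -S\,v_f(\x^{m-1})$. Combining gives $\langle G,\x^*-\x^0\rangle\le S\bigl(\tfrac14 v_f(\x^0)-\tfrac54 v_f(\x^{m-1})\bigr)$, and under the contradiction hypothesis $v_f(\x^{m-1})\ge v_f(\x^m)>\tfrac12 v_f(\x^0)$ this is $<-\tfrac38 S\,v_f(\x^0)$.

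It then remains to close the loop. Cauchy--Schwarz gives $\|G\|\ge|\langle G,\x^*-\x^0\rangle|/\|\x^*-\x^0\|$, and property (c) bounds $\|\x^*-\x^0\|\le\sqrt{2v_f(\x^0)/l}$, so the estimate above yields a lower bound of the form $\|G\|>\tfrac38 S\sqrt{l\,v_f(\x^0)/2}$. Comparing with the upper bound $\|G\|\le\rho\sqrt{v_f(\x^0)}$ from \eqref{eq2} and cancelling $\sqrt{v_f(\x^0)}$ produces $S<\tfrac{8\rho}{3}\sqrt{2/l}$; substituting $S\ge m/\sqrt{2L}$ gives $m<\tfrac{16}{3}\rho\sqrt{L/l}<8\rho\sqrt{L/l}$, the desired contradiction.

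I expect the only real obstacle to be bookkeeping rather than any deep idea: one must orchestrate the signs so that \eqref{eq1} precisely cancels the cross term $\sum\lambda^j\langle\g^j,\x^j-\x^0\rangle$ and the monotonicity estimate handles $\sum\lambda^j v_f(\x^j)$, all while keeping the final constant strictly below $8$ (the slack between $16/3$ and $8$ confirms the count is comfortable). Viewed this way the two technical hypotheses stop looking arbitrary: \eqref{eq2} is a reverse-Cauchy--Schwarz / near-collinearity condition that converts $\|G\|$ back into the telescoping quantity $v_f(\x^0)-v_f(\x^m)$, and \eqref{eq1} is engineered exactly to eliminate the inner-product cross term. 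The genuine content not addressed by this lemma is why the CGSO iterates actually satisfy \eqref{eq1} and \eqref{eq2}; here they are taken as given.
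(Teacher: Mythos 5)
Your proof is correct and follows essentially the same route as the paper's: contradiction, the weighted sum of the convexity inequality with the split $\x^*-\x^j=(\x^*-\x^0)-(\x^j-\x^0)$, hypothesis \eqref{eq1} to eliminate the cross term, Cauchy--Schwarz combined with property (c), the bound $\|\sum_j \lambda^j\g^j\|\le\rho\sqrt{v_f(\x^0)}$ from \eqref{eq2} plus telescoping, and finally $\lambda^j\ge 1/\sqrt{2L}$ to bring in $m$. The only difference is bookkeeping: you bound $-\sum_j\lambda^j v_f(\x^j)$ by $-S\,v_f(\x^{m-1})$ and invoke the contradiction hypothesis at the very end (which yields the slightly sharper constant $16/3$ in place of $8$), whereas the paper bounds $-v_f(\x^j)\le -v_f(\x^m)$ up front and uses $f(\x^*)\le f(\x^{m-1})$ to apply \eqref{eq1}.
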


\noindent
{\bf Remark.}  For the remainder of this paper, we regard
conditions \eref{eq1} and \eref{eq2} stated in the above lemma as quantification
of the independence of succesive search directions.  
In other words, we define the phrase ``loss of independence'' to mean
failure of these inequalities.
For example,
in the case of linear conjugate gradient, \eref{eq1} is automatically
satisfied because 
$\left\langle \g^j,\x^j-\x^0\right\rangle=0$ by orthogonality of 
gradients.  In addition, \eref{eq2} is satisfied as an equality with $\rho=1$
by linear conjugate gradient because in this case it reduces to Pythagoras's equation.
Thus, loss of independence never occurs in linear conjugate (in exact arithmetic).

\begin{proof}
Our proof is an extension of the proof in section 7.3 in \cite{nemiryudinbook}. Suppose by contradiction that $m\ge \left \lceil8\rho\sqrt{\frac{L}{l}}\right \rceil$, \eqref{eq1} and \eqref{eq2} are satisfied; but $v_f(\x^m)>\frac{v_f(\x^0)}{2}$.

By definition of $\lambda^j$, 
$$f(\x^{j+1}) = f(\x^j)- \left(\lambda^j\right)^2 \| \g^j\|^2,$$
hence 
$$v_f(\x^{j+1})= v_f(\x^j)-\left(\lambda^j\right)^2\| \g^j\|^2.$$

Summing these inequalities over $j=0,\ldots,m-1$, we get:
$$0\le v_f(\x^m) = v_f(\x^0)-\sum_{j=0}^{m-1} \left(\lambda^j\right)^2\| \g^j\|^2,$$
or equivalently,  
\begin{equation}\label{eq3} \sum_{j=0}^{m-1}\left(\lambda^j\right)^2\| \g^j\|^2\le v_f(\x^0). \end{equation}

By convexity of the function we have,
$$\left\langle  \g^j, \x^*-\x^j\right\rangle \le f(\x^*)-f(\x^j)=-v_f(\x^j),$$ and so
$$\left\langle  \g^j, \x^*-\x^0\right\rangle - \left\langle  \g^j, \x^j-\x^0\right\rangle \le -v_f(\x^j)\le -v_f(\x^m) <\frac{-v_f(\x^0)}{2}.$$

Let's consider the weighted sum of all the above inequalities for $j=0, \ldots m-1$ with weights $\lambda^j$'s to get:
$$\left\langle \sum_{j=0}^{m-1}\lambda^j\g^j, \x^*-\x^0\right\rangle - \sum_{j=0}^{m-1}\lambda^j \left\langle  \g^j, \x^j-\x^0\right\rangle < \frac{-v_f(\x^0)}{2}\left( \sum_{j=0}^{m-1}\lambda^j\right),$$
which can be rearranged to the following form, 
$$\left\langle  \sum_{j=0}^{m-1}\lambda^j\g^j, \x^*-\x^0\right\rangle <  - \frac{v_f(\x^0)}{2}\left( \sum_{j=0}^{m-1}\lambda^j\right) + \sum_{j=0}^{m-1}\lambda^j \left\langle  \g^j,\x^j-\x_0\right\rangle.$$

Equivalently we can rewrite the above inequality as:
\begin{align*}\left\langle  \sum_{j=0}^{m-1}\lambda^j\g^j, \x^*-\x^0\right\rangle < &- \frac{v_f(\x^0)}{4}\left( \sum_{j=0}^{m-1}\lambda^j\right) \\ &+ \left( \frac{f(\x^*)-f(\x^0)}{4}\left( \sum_{j=0}^{m-1}\lambda^j\right) +\sum_{j=0}^{m-1}\lambda^j \left\langle  \g^j, \x^j-\x^0\right\rangle \right).\end{align*}

Using inequality \eqref{eq1} along with the facts that $f(\x^*)\le f(\x^j)$ and $\lambda^j \ge 0$ for all $j$, we get:
\begin{equation} \label{eq4} \left\langle  \sum_{j=0}^{m-1}\lambda^j\g^j, \x^*-\x^0\right\rangle <  - \frac{v_f(\x^0)}{4}\left( \sum_{j=0}^{m-1}\lambda^j\right). \end{equation}

By the Cauchy-Schwarz inequality we have
$$-\left\| \sum_{j=0}^{m-1}\lambda^j\g^j\right\| \left\| \x^*-\x^0\right\| \le  \left\langle  \sum_{j=0}^{m-1}\lambda^j\g^j, \x^*-\x^0\right\rangle <  -\frac{v_f(\x^0)}{4}\left( \sum_{j=0}^{m-1}\lambda^j\right), $$
hence
\begin{equation}\label{eq5}
\left\| \sum_{j=0}^{m-1}\lambda^j\g^j\right\| \left\| \x^*-\x^0\right\| >  \frac{v_f(\x^0)}{4}\left( \sum_{j=0}^{m-1}\lambda^j\right).
\end{equation}

By property \eqref{prop3} we have
\begin{equation}\label{eq6} \left\| \x^*-\x^0\right\| \le \sqrt{\frac{2v_f(\x^0)}{l}}. \end{equation} 

Furthermore, by inequalities \eqref{eq2} and \eqref{eq3} we get:
\begin{equation}\label{eq7}\left\| \sum_{j=0}^{m-1}\lambda^j\g^j\right\| \le \rho \sqrt{ \sum_{j=0}^{m-1}\left(\lambda^j\right)^2\| \g^j\|^2}\le  \rho\sqrt{v_f(\x^0)}.\end{equation}

Replacing inequalities \eqref{eq6} and \eqref{eq7}  in inequality \eqref{eq5}, we get 
\begin{equation}\label{eq8}
\rho\sqrt{v_f(\x^0)}\sqrt{\frac{2v_f(\x^0)}{l}}>  \frac{v_f(\x^0)}{4}\left( \sum_{j=0}^{m-1}\lambda^j\right).  \end{equation}

Notice that by definition of $\lambda$ and property \eqref{prop1}, $\lambda^j \ge \sqrt{\frac{1}{2L}}$ for all $j$, so 
$$\sum_{j=0}^{m-1}\lambda^j \ge \sqrt{\frac{1}{2L}}\ m.$$

Using this fact in inequality \eqref{eq8}, we get
$$\rho\sqrt{v_f(\x^0)}\sqrt{\frac{2v_f(\x^0)}{l}}>  \frac{v_f(\x^0)}{4}\left(\sqrt{ \frac{1}{2L}} \ m \right),$$ 
therefore 
\begin{equation}\label{eq9}
m< 8\rho \sqrt{\frac{L}{l}},
\end{equation}
which contradicts our assumption on the value of $m$.
\qquad
\end{proof}

Lemma \ref{lem1} shows that under conditions \eqref{eq1} and
\eqref{eq2}, the residual of the function is divided in half every
$m=O(\sqrt{\frac{L}{l}})$ iterations. For the next sequence of $m$
iterations, a further reduction of $\frac{1}{2}$ is achieved provided
\eqref{eq1} and \eqref{eq2} hold, with $\x^m$ substituted in place of
$\x^0$. Hence by letting $\x^m$ be the new $\x^0$ and repeating the
same algorithm, we can find the $\epsilon$-optimal solution in
$\lt\lceil \log_2 \frac{1}{\epsilon} \rt\rceil \lt\lceil 8\rho
\sqrt{\frac{L}{l}} \rt\rceil$ iterations. Nemirovsky and Yudin's
algorithm follows this outline: it
is designed to ensure that \eref{eq1} and \eref{eq2} hold
on every iteration, and it restarts every $m$ iterations.
For ordinary nonlinear CG, however, there is no
assurance that these inequalities will hold, and, furthermore, $m$ is not
known.  These issues motivate our
detection and correction steps.

\section{Detecting loss of independence}
\label{sec:detection}

As mentioned in the previous section, we take ``loss of independence'' to mean
failure of \eref{eq1} or \eref{eq2}.  In this section we describe a method
to detect the failure of these inequalities.  

Before turning to \eref{eq1} and \eref{eq2}, 
we note that the lemma can also fail if condition
(a), namely, the requirement that
$f(\x^{j+1})\le f(\x^j)-\frac{1}{2L}\| \g^j\|^2$,
fails to hold.  If we had prior knowledge of $L$, then this condition
would be trivial to check since nonlinear CG already computes $\g^j$ on every
iteration.  Without prior knowledge of $L$, we can still in principle check this
condition by carrying out a Wolfe line-search \cite{nocedalwrightbook}
in the direction $-\g^j$ on 
every iteration.  It is known that, up to a constant factor depending on the
parameters $\beta,\sigma$ used in the line-search, the reduction guaranteed
is at least as good as $\| \g^j\|/(2L)$.  However, it is quite expensive
to carry out a line search in the steepest descent direction on every iteration
in addition to the line search already required for the CG direction.
Our computational experiments (not reported here) indicate
that it is not necessary because there is little improvement in the behavior
of the method.  Therefore,
for the rest of this paper, we will simply assume that (a) holds.

We next turn to \eref{eq1} and \eref{eq2}.  It is apparent from their form
that they can be checked efficiently by keeping running totals of all the
summations appearing in them.  This is how we have implemented them.  The extra
cost for tracking these summations is very low compared to the existing cost
of evaluating $f$ and $\nabla f$ in an ordinary CG iteration.
As mentioned at the end of the previous section,
every $m$ iterations, we need to replace
$\x^0$ by $\x^{im}$ for integer values of $i$ in \eref{eq1} and \eref{eq2}
in order to obtain the theoretical convergence result.

This replacement of $\x^0$ by $\x^{im}$ is a sticking
point because $m$ is not known in advance:
the algorithm does not have prior knowledge of $L$ or $l$.  
Furthermore, for some classes of strongly convex functions such as
log-barrier functions, the effective value of $L/l$ may decrease
as the optimizer is approached.
We address this
difficulty as follows.  Although $L/l$ is not known, we can be certain that
there is some nonnegative integer $p$ such that $L/l\in [2^p, 2^{p+1}]$.
Therefore, we
maintain $p_{\max}$ separate sets of running totals, where 
$p_{\max}=\lceil\log_2 j\rceil$, where
$j$ is the current iteration counter.  In other words, for each
$p\in\{0,\ldots,p_{\max}\}$, we maintain a current value of the
summation $\sum_{j'=m(j,p)^j}\lambda^{j'}$ and so on for all the summations appearing
in \eref{eq1} and \eref{eq2}.  Here $m(j,p)$ denotes the largest multiple of
$2^p$ less than or equal to the current iteration counter $j$.
Once $j$ reaches
the next multiple of $2^p$, we can check the inequalities for this particular
value of $p$.  This additional work for updating the $p_{\max}$
running totals and checking the inequalities is still
insignificant compared to the work of evaluating the gradient and carrying
out a line-search; it adds an additional $O(\log j)$ arithmetic operations
to the $j$th iterate.

In fact, there is little harm in omitting the check on the conditions
for very small values of $p$ since the lemma will still guarantee convergence,
albeit slightly more slowly, if we catch those corrections for larger
values.  For this reason, the conditions are actually tested only
for $p\ge p_l$ in our implementation, where we have taken $p_l=4$.

This concludes our description of the detection procedure.  If the failure
of these inequalities is repeatedly detected, this is an indicator that
loss of independence has occurred.  

\section{Correcting the loss of independence}
\label{sec:correction}

It is already useful to be able to detect loss of independence, since this
is a sign that conjugate gradient may not be working.  One possibility when
loss of independence is detected is to simply restart.  As mentioned in
the introduction, restarting is the conventional solution to loss
of independence in CG.

We have instead adopted a more comprehensive solution, namely, we
propose a correction procedure to ensure that \eref{eq1} and
\eref{eq2} are guaranteed to hold.  The correction procedure is
similar to the subspace optimization proposed by Nemirovsky and Yudin.
A consequence of our correction procedure is that we are assured that
their theoretical complexity bound of $O(|\ln \epsilon|\sqrt{L/l})$
iterations
holds for nonlinear CG if our correction procedure is instituted.
Furthermore, we have an advantage over the Nemirovsky-Yudin algorithm
that prior knowledge of $L/l$ is not required.  On the other hand, we
have a disadvantage that the dimension of the subspace could be larger
than 2 (their dimension), and hence our iterations can be more expensive.

Let us refer to the sequence of iterates between two consecutive multiples of $2^p$
as
a ``block'' of iterates; in other words, for any $p$, the sequence of
iterates $\x^0, \x^1,\ldots, \x^{2^p-1}$ is the first block of size
$2^p$, $\x^{2^p}, \x^{2^p+1}, \ldots, \x^{2(2^p)-1}$ is the second
block of size $2^p$, and so on.  At the end of each block we check
inequalities \eqref{eq1} and \eqref{eq2}. If they are satisfied and
$2^p\ge \lt\lceil8\rho\sqrt{\frac{L}{l}}\rt\rceil$, then by Lemma
\ref{lem1} we know that the residual of the function is divided in
half; however if any of these inequalities fails, then 
we need to take a ``correction step" for the next
block of iterates. The correction step involves computing the next
block of iterates in a way that satisfaction of inequalities
\eqref{eq1} and \eqref{eq2} is guaranteed at the end of this
block. Then the correction step is omitted in the subsequent blocks
until the inequalities are violated again.

Suppose at least one of the inequalities \eqref{eq1} and \eqref{eq2}
is violated for $k$th block of $p$; i.e. for the block of iterates
$\x^{r_p}, \ldots, \x^{r_p+2^p-1}$ where $r_p=(k-1)2^p$. Then for the
next block we search for the new iterate $\x^{j+1}$ on the space of
$\x^j+\text{\emph {Sp}} \left\{\g^j, \d^j, \q^j_p, \x^j-\x^{r_p}
\right\}$ where $\q^j_p=\sum_{i=r_p}^j \lambda^i\g^i$.
Notice that this space includes the conjugate gradient search direction
(all variants) because it is a linear combination of $\g^j$
and $\d^j$.

Finding the new iterate $\x^{j+1}$ through a search on the space that
in addition to $\g^j$ and $\d^j$ includes $\q^j_p$ and $\x^j-\x^{r_p}$
is what we referred to as ``correction step". Notice that for each $p$
with the violated constraints we increase the dimension of the search
space by 2. However, the dimension of the search space never exceeds
$2+2\lceil p_{\max}-p_l+1\rceil$, which happens to be the case when
the inequalities are violated for all possible values of $p$.  (Recall
that we check the inequalities for $p=p_l,p_{l+1},\cdots,p_{\max}$,
on iteration $j$, where $p_l=4$ in our implementation and
where $p_{\max}=\lceil \log_2 j\rceil$.)

It is quite easy to see that inequalities \eqref{eq1} and \eqref{eq2}
are satisfied for the $(k+1)$st block of $p$ when we take the
correction step throughout it. By KKT condition, we have $\left\langle
\g^{j}, \x^{j}-\x^{r_p}\right\rangle =0$ for all $ j$ in this
block. Using this, along with the fact that $f(\x^j)<f(\x^{r_p})$ and
non-negativity of $\lambda^j$ for all $j$, we derive
\eqref{eq1}. Similarly one can argue that by KKT $\left \langle \g^j,
\q^{j-1}_p\right\rangle=0$ for all $j$, hence
$$ \left \| \sum_{i=r_p}^{r_p+2^p-1} \lambda^i\g^i\right\| = 
\sqrt{ \sum_{i=r_p}^{r_p+2^p-1} \left(\lambda^i\right)^2 \| \g^i\|^2},$$
which means inequality \eqref{eq2} is satisfied. 
(Notice that this equation holds provided $\g^j$ is orthogonal to
the previous running total of weighted gradients; 
it is not necessary for $\g^j$ to be orthogonal
to each previous gradient.)

After finding the iterates of one block through a correction step, the algorithm switches back to taking a regular step until the next failure of the inequalities.

We have implemented two procedures for subspace optimization: Newton's
method and the ellipsoid method.  We used Newton's method unless it
fails to rapidly converge to the optimum. Note that the assumption of
strong convexity is not a sufficient condition for convergence of
Newton's method, but it succeeds in many cases nonetheless.
In the case of failure of
Newton's algorithm, the ellipsoid method carries out the task of
solving the optimization problem. In other words, we impose an upper
bound to the number of iterations that Newton's method may take, and
if it fails to converge within the given number of iterations, the
algorithm switches to the ellipsoid method for solving the subspace
problem. 

Recall that at $(j+1)$st iterate, we search for $\x^{j+1}$ in the
space of vectors $\x=\x^j + \alpha \g^j + \beta \d^j + Q\a  +  R\b$,
where $Q\in \mathbb R^{n\times |S|}$ is the matrix formed by columns
$\q_p^j$ for all $p\in S$; $R$ is the matrix of the same dimension
with columns $\x^j-\x^{r_p}$ for all $p\in S$; $\alpha, \beta \in
\mathbb R$, and $\a, \b\in \mathbb R^{|S|}$ are coefficients that we
want to find.  Here, $S\subset\{p_l,\ldots,p_{\max}\}$ denotes
the set of indices for which correction is required.

Let $\y$ denote the variable of the subspace
optimization problem, i.e., $\y=\left[ \alpha, \beta, \a^t, \b^t
  \right]^t$; in addition let $B= \left[\g^j, \ \d^j, \ Q, \ R
  \right]$ and $K=2+2\left|S\right|$. We can now state the formal
presentation of the subspace optimization problem,

\begin{equation}\label{subprob}
\min_{\y\in \mathbb R^{K}} f\left(\x^j+B\y\right)
\end{equation}

As mentioned above, we first attempt to solve problem \eqref{subprob} with Newton's method. Letting $\tilde f(\y)=f(\x^j+B\y)$ and using chain rule we get the following formulas for the gradient and Hessian of each Newton's iteration, 
\begin{equation}\label{gradnewton}
\nabla \tilde f(\y)= B^t\nabla f(\x)
\end{equation}
\begin{equation}\label{hessnewton}
\nabla^2 \tilde f(\y)= B^t\nabla^2 f(\x) B
\end{equation}

Notice that some second order information of the function comes into
play in equation \eqref{hessnewton}. We compute $\nabla
f(\x)$ and $\nabla^2 f(\x)$ directly when $f(\x)$ is simple
enough. For more complicated functions we use automatic
differentiation (AD) in backward mode to compute $\nabla f(\x)$ and
$\nabla^2 f(\x) B$. Let $B^{(k)}$ denote $k$th column of matrix
$B$. Backward AD enables us to keep the computational cost of $\nabla
f(\x)$ within a constant factor of the objective function evaluation
cost, and the cost of computing $\nabla^2 f(\x)B^{(k)}$ within a
constant factor of the computational cost of gradient evaluation multiplied
by the number of columns of $B$. The
storage space required in backward AD, however, is more than the
required storage in forward AD; and in worst case it can be
proportional to the number of operations required for computing
$f(\x)$. We did not use an AD tool but rather derived second derivative
routines by hand.  Details on our test
problems are presented in Section~\ref{sec:comput}. For more information
on AD, one may refer to \cite{nocedalwrightbook}.

In addition to the storage required by AD, we need to store $\x^j$,
and matrix $B$; we also need to update and store $\x^{r_p}$,
$\sum_{i=r_p}^{j} \lambda^i$, $\sum_{i=r_p}^{j} \lambda^i \left\langle
\g^i,\x^i-\x^{r_p}\right\rangle$, $\sum_{i=r_p}^{j} \lambda^i\g^i$,
$\sum_{i=r_p}^{j} (\lambda^i)^2\left \| \g^i \right \|^2$ for all
$p\in \left\{ p_l, \ldots, p_{\max}\right\}$. The
required storage space for the above elements is in $O(n\lceil
\log_2j\rceil\})$.

The subspace optimization with either Newton's method or the ellipsoid
method needs a termination test.  For this purpose, we again
rely on \eqref{eq1} and \eqref{eq2}.  Although the lemma requires
these inequalities to be checked only at an iteration
at the end of a block, it
is also possible to check them on intervening iterations.  We use
these inequalities to terminate the search for a subspace solution.
Note that at an exact solution
to the subspace problem, the inequalities are sure to hold because
of the KKT conditions of the subspace problem, as already mentioned.

We can now present the algorithm in its entirety. We call it
CGSO for ``conjugate gradient with subspace optimization.''
In this procedure, $S$ is a subset of $\{p_l,\ldots,p_{\max}\}$ and
denotes the set of values of $p$ for which correcting is currently
active. To save space, we use the Python tabbing convention that 
the end of a code-block is denoted by a retraction of the indent-level.

\begin{alg}\label{alg1}
\begin{tabbing}
\\
+\= +++ \= +++\=+++\=+++ \=+++\=\kill
\> SUBROUTINE: \verb+verify_step+$(\x^j,\s^j)$ \\
\>\> {\bf for} each $p \in S$ \\
\> \>\>   {\bf if} \eref{eq1} or \eref{eq2} fail with $\x^{m(j,p)}$ 
substituted for $\x^0$ and $\x^j+\s^j$ substituted 
for $\x^{m-1}$\\
\>\>\>\> {\bf return} {\em False}; \\
\>\> {\bf return} {\em True};\\
\> MAIN PROCEDURE: \verb+CGSO+$(\x^0)$ \\
\>  $S=\emptyset$\\ 
\> {\bf for} $j=1,2,\ldots$\\
\>\>  $\d^j = -\g^j+\beta^j\d^{j-1}$;  \\
\>\>\>\>Remark: this is the ordinary nonlinear CG direction. \\
\>\>\>\> Remark: take $\beta^j=0$ if either $j=1$ or $\d^{j-1}$ was discarded.\\
\>\>  $\alpha^j= \verb+Wolfe_line_search+(f,\x^j,\d^j);$ \\
\>\>  stepfound = {\em False}; \\
\>\>  {\bf if} \verb+verify_step+$(\x^j,\alpha^j\d^j)$ \\
\>\>\>  stepfound = {\em True}; \\
\>\>\>  $\s^j=\alpha^j\d^j$; \\
\>\> {\bf else} \\
\>\>\> discard $\d^j$;\\
\>\> {\bf if not} stepfound \\
\>\>\>  Apply Newton's method to solve \eref{subprob}.\\
\>\>\>  Terminate if either \verb+verify_step+$(\x^j,B\y^l)$
 or iteration-max is attained.\\
\>\>\> {\bf if} \verb+verify_step+$(\x^j,B\y^l)$ \\
\>\>\>\>  stepfound = {\em True}; \\
\>\>\>\>  $\s^j=B\y^l;$ \\
\>\> {\bf if not} stepfound \\
\>\>\>  Apply the ellipsoid method to solve \eref{subprob}.\\
\>\>\>  Terminate when \verb+verify_step+$(\x^j,B\y^l)$. \\
\>\>\>  stepfound = {\em True}; \\
\>\>\>  $\s^j=B\y^l;$ \\
\>\> $\x^{j+1} = \x^{j}+\s^{j};$\\
\>\>  {\bf for} $p=p_l,\ldots, \lceil \log_2j\rceil$\\  
\>\>\> {\bf if} $j+1 = k_p2^p$ for some integer $k_p$\\
\>\>\>\> {\bf if} $p\in S$\\
\>\>\>\>\> $S=S\setminus \{p\}$\\
\>\>\>\> {\bf elseif not} \verb+verify_step+$(\x^{j-1},\s^{j-1})$; \\
\>\>\>\>\>  $S=S\cup \{p\}$
\end{tabbing}

 \end{alg}

As mentioned earlier, because the above algorithm enforces
\eref{eq1} and \eref{eq2} for every value of $p$ and for at
least every other block, we get the optimal convergence bound.
 
 \begin{theorem}\label{thm1}
Suppose $m\ge \left \lceil 8\rho \sqrt {\frac{L}{l}}\right\rceil$, and $\x^j$ is a sequence generated by Algorithm $\ref{alg1}$ for solving problem \eqref{probdef}.
Then for any integer $n\ge 0$, $v_f(\x^{(n+4)m})\le \frac{1}{2}v_f(\x^{nm})$.
\end{theorem}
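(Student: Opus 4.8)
The plan is to reduce the statement to the existence of a single complete halving block inside the window $[nm,(n+4)m)$ and then invoke monotonicity. Since Algorithm~\ref{alg1} is a descent method, $v_f(\x^{j+1})\le v_f(\x^j)$, so it suffices to exhibit indices $s,t$ with $nm\le s\le t\le (n+4)m$ and $v_f(\x^t)\le\frac12 v_f(\x^s)$; the conclusion then follows from $v_f(\x^{(n+4)m})\le v_f(\x^t)\le\frac12 v_f(\x^s)\le\frac12 v_f(\x^{nm})$. To produce such a pair I would fix one block size $Q=2^P$, taking $P$ to be the least integer with $2^P\ge m$, so that $m\le Q\le 2m$. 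In particular $Q\ge m\ge\lceil 8\rho\sqrt{L/l}\rceil$, hence every complete $Q$-block is long enough for Lemma~\ref{lem1} to apply to it, and the restart tests \eqref{eq1}--\eqref{eq2} are exactly those checked by the algorithm for the index $P$ (with $r_p$ the block's left endpoint).

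First I would establish the backbone fact that \emph{no two consecutive $Q$-blocks can both fail to halve the residual}. Classify each block $[r,r+Q)$, with $r$ a multiple of $Q$, as \emph{passing}, \emph{failing}, or a \emph{correction} block according to the bookkeeping of $S$ in Algorithm~\ref{alg1} for the index $P$. If \eqref{eq1}--\eqref{eq2} hold at the end of the block (passing), Lemma~\ref{lem1} gives $v_f(\x^{r+Q})\le\frac12 v_f(\x^r)$. If they fail, $P$ is inserted into $S$ and, as in the correction-step analysis, the following block is computed over the enlarged subspace containing $\q_P^{\,j}=\sum_{i}\lambda^i\g^i$ and $\x^j-\x^{r}$; the KKT relations $\langle\g^j,\x^j-\x^{r}\rangle=0$ and $\langle\g^j,\q_P^{\,j-1}\rangle=0$ then force \eqref{eq1} (using $f(\x^j)<f(\x^{r})$ and $\lambda^j\ge0$) and \eqref{eq2} (as an equality, which suffices since $\rho\ge1$) to hold at that block's end, so it too is a halving block by Lemma~\ref{lem1}. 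Thus every failing block is immediately followed by a halving block, two non-halving blocks are never adjacent, and \emph{any two consecutive complete $Q$-blocks contain at least one block on which the residual is at least halved}.

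The remaining, and in my view hardest, step is the alignment count: showing that the window $[nm,(n+4)m)$, of length $4m$, contains two consecutive complete $Q$-blocks. This is delicate precisely because $Q$ may be as large as $2m$, the window endpoints are multiples of $m$ rather than of $Q$, and the correction step introduces a one-block lag between a failure and its guaranteed halving. The clean sufficient condition is $Q\le 4m/3$, since then $\lfloor 4m/Q\rfloor-1\ge 2$ forces two full blocks into the window for every alignment; the constant $4$ is chosen so that this remains salvageable in the residual regime $m\le Q\le 2m$ where a power of two in $[\lceil 8\rho\sqrt{L/l}\rceil,4m/3]$ need not exist. Concretely I would set $k_1=\lceil nm/Q\rceil$ and verify $k_1Q\ge nm$ together with $(k_1+2)Q\le (n+4)m$ using $Q\le 2m$, isolating and separately handling the tight boundary case $Q=2m$, where one must fall back on the correction lag to certify that the single complete block forced into the window, if it fails, is paired with a halving companion that still ends before $(n+4)m$.

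Once two consecutive complete blocks are located inside $[nm,(n+4)m)$, the backbone fact supplies a halving block $[s,s+Q)\subseteq[nm,(n+4)m)$, giving $v_f(\x^{s+Q})\le\frac12 v_f(\x^s)$ with $nm\le s<s+Q\le(n+4)m$, and monotonicity finishes the proof. The main obstacle is therefore the bookkeeping of the previous paragraph: keeping simultaneous track of the misalignment between the grids $\{kQ\}$ and $\{nm\}$ and of the correction-step lag, and checking that the slack budgeted by the factor $4$ is never exhausted. Everything else reduces to Lemma~\ref{lem1}, the correction-step optimality conditions, and the descent property.
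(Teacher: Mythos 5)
Your overall decomposition is exactly the paper's: the paper also fixes $s_{\bar p}=2^{\bar p}\in[m,2m]$, uses the fact that of any two consecutive $s_{\bar p}$-blocks at least one satisfies \eqref{eq1}--\eqref{eq2} (a checked block that passes halves the residual by Lemma \ref{lem1}; a checked block that fails makes the next block a correction block, whose KKT conditions force \eqref{eq1}--\eqref{eq2}, so it halves as well), and then finishes with the descent property and $2s_{\bar p}\le 4m$. So your ``backbone fact'' and the final monotonicity step coincide with the paper's proof.

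The genuine gap is precisely the step you singled out as hardest, and your proposed repair of it does not work. When $\frac{4m}{3}<Q\le 2m$ and $nm$ is not a multiple of $Q$, the window $[nm,(n+4)m]$ is only guaranteed to contain one complete block $[r_0,r_0+Q]$, where $r_0=\lceil nm/Q\rceil Q$ can be as large as $nm+Q-1$. If that block fails the tests, its guaranteed-halving companion is the \emph{following} correction block, which ends at $r_0+2Q\le nm+3Q-1$, i.e.\ possibly as late as $nm+6m-2$, well beyond $(n+4)m$; the budget of $4$ does not absorb this. Falling back on the \emph{preceding} block does not help either: that block straddles $nm$, so its halving gives $v_f(\x^{r_0})\le\frac12 v_f(\x^{r_0-Q})$ with $r_0-Q<nm$, and since $v_f(\x^{r_0-Q})\ge v_f(\x^{nm})$ this inequality points in the useless direction. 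So the ``correction lag'' cannot certify $v_f(\x^{(n+4)m})\le\frac12 v_f(\x^{nm})$ in this configuration. You should know, though, that the paper's own proof is \emph{worse} off on this very point: it asserts \eqref{thm1eq1}, $v_f(\x^{nm+2s_{\bar p}})\le\frac12 v_f(\x^{nm})$, which tacitly assumes that two consecutive complete blocks begin exactly at $nm$, i.e.\ that $s_{\bar p}$ divides $nm$; this holds, for instance, when $m$ is itself a power of two, but the misaligned case is never addressed. The statement can be rescued by (i) taking $m$ to be a power of two at least $\lceil 8\rho\sqrt{L/l}\rceil$ (enough for the claimed complexity bound), (ii) enlarging the constant from $4$ to $6$, since a window of length at least $3Q-1$ contains two consecutive complete blocks under any alignment, or (iii) chaining the ends of halving blocks directly (consecutive halving-block ends are at most $2Q\le 4m$ apart). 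Any of these closes the hole; nothing anchored at arbitrary multiples of $m$ with the constant $4$ follows from the two-consecutive-blocks fact alone.
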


\begin{proof}
Let $\bar p$ be the integer for which $2^{\bar p-1}\le m \le 2^{\bar p}$; and let $s_{\bar p}$ stand for $2^{\bar p}$. Using algorithm \ref{alg1}, we are guaranteed that for at least one of any two consecutive blocks of size $s_{\bar p}$ inequalities \eqref{eq1} and \eqref{eq2} are satisfied. The size of this block is $s_{\bar p} \ge m \ge \left \lceil 8\rho \sqrt {\frac{L}{l}}\right\rceil$ and hence by Lemma \ref{lem1} we have
\begin{equation}\label{thm1eq1}v_f(\x^{nm +2s_{\bar p}})\le \frac{1}{2}v_f(\x^{nm}).\end{equation}

Since $2s_{\bar p}\le 4m$, so $f(\x^{nm+4m})\le f(\x^{nm+2s_{\bar p}})$; hence 
\begin{equation}\label{thm1eq2}v_f(\x^{nm+4m})\le v_f(\x^{nm+2s_{\bar p}}).\end{equation}

\eqref{thm1eq1} and \eqref{thm1eq2} gives us the result we wanted to show.
\qquad
\end{proof}

\section{Remarks on computational divided differences}
\label{sec:divdiff}
In a line-search for conjugate gradient, it is necessary to accurately
evaluate quantities of the form $f(\x+\alpha\d)-f(\x)$.  A similar
quantity arises in the ratio test for the trust-region method
\cite{nocedalwrightbook}.  It is well known to implementors of such
methods that these divided differences are problematic near the
root because of cancellation error between the two terms.  A brief
discussion of this issue appears in Hager and Zhang \cite{hagerzhangdescentcg}.
Failure to compute these quantities accurately can lead either to
premature termination of an algorithm or to infinite loops.

A solution to this problem, perhaps not as widely known in the optimization
literature as it should
be, is ``computational divided differences'' by Rall and Reps \cite{RepsRall}.
The idea is
to transform a source-code program for computing $f$ into another 
source-code program for accurately computing divided differences of
$f$.  The technique is somewhat reminiscent of automatic differentiation.

To give a concrete example, consider the log-barrier function that will
be used in Section~\ref{sec:comput} as a test case, which is written as
$f(\x)=\sum_{i=1}^m\log(\a_i^T\x-b_i)$, where each $\a_i$ is given
vector in $\R^n$ and each $b_i$ is a given scalar.  
This function is
defined on the open polyhedron given by $A\x>\b$ and strongly convex
on this polyhedron provided that the polyhedron is bounded.
Suppose $\x$ is our current iterate and $\bdelta$ is a small step.
We have the following
derivation:
\begin{eqnarray*}
f(\x+\bdelta)-f(\x) & =
& \sum_{i=1}^m\log(\a_i^T(\x+\bdelta)-b_i)- \sum_{i=1}^m\log(\a_i^T\x-b_i) \\
& = & 
\sum_{i=1}^m\log\left(1 + \frac{\a_i^T\bdelta}{\a_i^T\x-b_i}\right).
\end{eqnarray*}
Thus, to evaluate this divided difference accurately, one needs a function
to compute $\log(1+a)$ accurately when $|a|$ is small.  One can develop
a method for this computation using calculus.  That
effort is, however,  unnecessary since Matlab and C++
both contain the built-in library function \verb+log1p+ for exactly this purpose.

We have used computational divided differences for all of our testing.
(We hand-coded the accurate
divided differences rather than using a source-to-source
translation tool; we are not sure if such a tool exists.)
In addition to the line-search,
our method uses computational divided differences 
for the evaluation of the left-hand side of \eref{eq1}.
Without them, all the methods would be less reliable and the
test results harder to interpret.  Indeed, we believe that computational
divided differences deserve to be used much more widely in general
nonlinear optimization than they are currently.
See also the unpublished note by the second author \cite{Vavasis:divdif}
for some comments on their use in optimization.

Because of our reliance on this technique, however, it is not possible
to directly compare our results in the next section to well known packages
like CG-DESCENT, which do not use computational divided differences.  For
this reason, we compare only our own implementations against each other.

\section{Computational experiments}\label{sec:comput}

We have tested the correction method on four classes of problems, three convex
and the fourth nonconvex.  Our setup was as follows.  We tried three different
variants of conjugate gradient, namely FR, PR+, and HZ.  Here, PR+ denotes
the Polak-Ribi\`ere method in which the parameter $\beta$ is replaced by 0
in the case that it becomes negative (thus forcing a restart), which
is a recommended modification (see \cite{nocedalwrightbook}).  HZ refers
to the CG-descent direction of Hager and Zhang \cite{hagerzhangdescentcg}.

Most of our test cases are small.  This allowed us to perform more experiments
in a reasonable amount of time.  As mentioned earlier, the behavior of conjugate
gradient is governed much more by conditioning of the problem than problem size.
However, to illustrate that the method is also suitable for large problems, we
have included two somewhat larger test cases.

The results of our experiments can be summarized as follows.  For uncorrected
methods, the HZ direction is usually the best while the FR method is usually
the worst, and sometimes FR is much worse.  For corrected methods, all three
directions perform about equally.  The corrected methods are typically slower
than the uncorrected HZ method for well-conditioned problems.  For ill-conditioned problems,
however, the corrected method is sometimes much better than HZ (as well as the
other two methods).  Note that no forced
restarts have been implemented.  However, there are still restarts in some
cases.  As noted above, in our correction procedure, when a conjugate
gradient search direction is discarded, the following step is, at least initially,
the 
steepest descent direction.  
Also as noted above, the PR+ method will sometimes restart
automatically if it computes a negative $\beta$.

Before presenting the results, we need to comment on how the running
time was measured.  We measure time in ``units'', where
we count as one unit an evaluation of a function
or gradient or function/gradient pair (at the same point).  In the
line-search procedure, gradients are evaluated several times, so each
outer iteration costs several units.  (Our line search is based on
simple bisection and the Wolfe conditions.)  
We count the evaluation of $\nabla^2f(\x)\y$, needed
for Newton's method, 
as two units.  Here, $\x$ and $\y$ are arbitrary vectors.
In fact, this is a
simplification since the cost varies for different functions.  For
example, in the case of a quadratic function, the cost of
$\nabla^2f(\x)\y$ is actually the same as the cost of $\nabla f(\x)$
(one matrix-vector multiplication).  The main theorem of backward-mode
automatic differentiation states that the evaluation of
$\nabla^2f(\x)\y$ should never cost more than 5 units.  (None of our
examples reach this upper bound of 5.)  Finally, one iteration of the ellipsoid
method also counts as one unit since it involves one gradient evaluation.

We now present the results in more detail.  The first test
function is a simple quadratic, $f(\x)=\x^TA\x+\b^T\x$ for a positive
definite matrix $A$.  Note that none of the methods reduce to
linear CG in this case because we did not implement an exact line
search.
Therefore, there is no prior guarantee that independence of search
directions is maintained.
On the other hand, because the problem is quadratic, the Newton
method on the subspace converges in a single iteration and the ellipsoid
method is never used.
In two cases we formed $A$ by choosing 1000 geometrically
spaced eigenvalues in a predetermined interval and then multiplying on the
left and right by a random $1000\times 1000$ orthogonal matrix.  In this way, the
condition number of $A$ is determined exactly.  In the third case we
formed $A$ as the assembled stiffness matrix of a finite-element discretization
of Poisson's equation on the unit disk with a relatively uniform and well-behaved 
mesh. This problem has moderate ill-conditioning, but the matrix was too large
to exactly measure its condition.
The results of
these experiment 
are shown in Table~\ref{tab:quadres}.

\begin{table}
\begin{center}
\caption{Number of units of computation for convex quadratic functions;
the first two lines are smaller problems ($n=1000$);
the last line is a larger finite-element problem,  $n=197,136$.  
An asterisk indicates a computation terminated due to an iteration limit.}
\label{tab:quadres}
\begin{tabular}{lrrrrrr}
\hline 
  & \multicolumn{3}{c}{Uncorrected} & \multicolumn{3}{c}{Corrected}\\
  & HZ & FR & PR+ & HZ & FR & PR+ \\
\hline
$\mbox{cond}(A)=10^5$ & 
38,483 & 98,442 & 73,756 & 87,894 & 85,930 & 88,280 \\
$\mbox{cond}(A)=10^8$ & 
*5,552,754 & 8,557,387 & *27,669,107 & 2,407,560 & 2,181,492 & 2,517,924\\
cond unknown &
149,543 & 66,373 & 400,112 & 115,698 & 110,230 & 86,200\\
\hline
\end{tabular}
\end{center}
\end{table}

The next class of experiments is with log-barrier functions, that is,
functions of the form $f(\x)=\mu\sum_{i=1}^m\log(\a_i^T\x-b_i)+\c^T\x$.  In these
experiments we generated $A$ randomly with known condition number for two
smaller cases, and we took $A$ to be the node-arc incidence matrix of an undirected
graph (hence two copies of each edge, one for each direction) for a larger
test case.  This matrix $A$ is relatively well conditioned.  However, we
can make the problem more ill-conditioned by decreasing $\mu$ (thus pushing
the solution closer to the boundary of the feasible region).
The graph in question
came from a DIMACS challenge problem. The results are in Table~\ref{tab:logbar}.

\begin{table}
\begin{center}
\caption{Number of units of computation for log-barrier functions.
The first three lines are smaller problems ($A\in\R^{400\times 100}$);
the last line is a larger DIMACS graph problem ($A\in\R^{91,756\times15,605}$).
In the third line, the condition number of $A$ was slightly worse.
An asterisk indicates a computation terminated due to an iteration limit.}\label{tab:logbar}
\begin{tabular}{lrrrrrr}
\hline 
  & \multicolumn{3}{c}{Uncorrected} & \multicolumn{3}{c}{Corrected}\\
  & HZ & FR & PR+ & HZ & FR & PR+ \\
\hline
$\mu=.4$ & 
292,012 & 1,496,650 & 963,968 & 461,036 & 394,382 & 420,808 \\
$\mu=.1$ &
593,190 & 3,034,394 & 2,059,235 & 1,568,258 & 1,477,938 & 1,463,452 \\
$\mu=.1$ &
*55,190,257 & *58,728,472 & *55,665,606 & 13,349,163 & 15,235,567 & 14,813,917 \\ 
$\mu=100$ &
1,298,292 & *6,633,403 & 2,297,573 &  762,649 & 654,900 & 668,127 \\
\hline
\end{tabular}
\end{center}
\end{table}

The third test case consists of smoothed versions of the LASSO problem.
The unsmoothed version of this problem has an objective function of the
form $\Vert A\x-\b\Vert^2+\lambda\Vert\x\Vert_1$, where $A$ has fewer rows
than columns.  In the smoothed version we approximate the function $|x|$ by
$(x^2+\delta)^{1/2}$ which is convex (strongly convex on bounded intervals) and smooth.
We did not try a large instance of this problem because typically $A$ is taken
to be a dense matrix, so a large problem would require too much computation
time.  The results are in Table~\ref{tab:lasso}.

\begin{table}
\begin{center}
\caption{Number of units of computation for regularized LASSO functions. For
each case, $A\in\R^{100\times 400}$.  For both rows, the regularization parameter
$\delta$ is $5\cdot 10^{-4}$.}
\label{tab:lasso}
\begin{tabular}{lrrrrrr}
\hline 
  & \multicolumn{3}{c}{Uncorrected} & \multicolumn{3}{c}{Corrected}\\
  & HZ & FR & PR+ & HZ & FR & PR+ \\
\hline
$\lambda=10^{-3}$, $\mbox{cond}(A)=10^5$
&
51,514 & 263,202 & 116,740 & 97,781 & 99,072 & 91,763 \\
$\lambda=10^{-4}$, $\mbox{cond}(A)=10^6$
&
986,314 & 5,049,449 & 3,397,063 & 810,887 & 926,389 & 879,384 \\
$\lambda=10^{-4}$, $\mbox{cond}(A)=10^6$
&
46,206,176 & 56,618,846 & 55,827,526 & 15,523,751 & 19,667,056 & 14,350,066\\
\hline
\end{tabular}
\end{center}
\end{table}

The final test case is the nonconvex distance geometry problem.  In this problem,
there is a sequence of $n$ points $(\x_1,\ldots,\x_n)$ each
in $\R^d$ whose coordinates are mostly unknown.  However,
many pairs of interpoint distances are given.  The problem is to find
the positions of the points.  This can be posed as a nonlinear least squares
problem of minimizing 
$\displaystyle \sum_{(i,j)\in E}\left(d^2_{i,j}-\Vert\x_i-\x_j\Vert^2\right)^2$
where $E$ is a list of the pairs $(i,j)$ whose distances are known, $d_{ij}$ is the
known distance, and the $\x_i$'s are unknown (except for a few, called `anchors',
which make the problem well posed).

Because of the nonconvexity, it is possible for different algorithms to converge
to different local optimizers; such a result would naturally make the running
time estimates difficult to interpret.  In order to prevent this inconsistency,
the data was constructed so that there is an exact solution (i.e., the
nonlinear least squares instance has a solution with zero residual), and then
all the methods were initialized at a point close to that solution.  With this device, 
we were able to ensure convergence to the same solution.  The coordinates of the
known solution were taken as random points in the plane, and a random subset
of possible edges was used in the objective function.

A second issue with nonconvexity is that the ellipsoid method is no longer
valid for solving the subspace problem.  Therefore, our two methods for solving
the subspace problem in this case were Newton, and, if it fails, a trust-region
method \cite{nocedalwrightbook}.  However, it turned out that the trust-region
method was never invoked, most likely because we started sufficiently close 
to the root.  We can control the conditioning of the problem by stretching
the random data points along one axis ($x$ or $y$).  The results of a well-conditioned
and ill-conditioned problem are in Table~\ref{tab:distgeo}.

\begin{table}
\begin{center}
\caption{Number of units of computation for distance geometry functions. In
each case the number of unknowns was 400 while the number of distances was 600.}
\label{tab:distgeo}
\begin{tabular}{lrrrrrr}
\hline 
  & \multicolumn{3}{c}{Uncorrected} & \multicolumn{3}{c}{Corrected}\\
  & HZ & FR & PR+ & HZ & FR & PR+ \\
\hline
stretch=1
&
29,829 & 61,148 & 56,453 & 42,311 &  38,501 & 43,123 \\
stretch=5
&
328,436 & 672,881   & 974,868  & 87,416 & 93,771 & 102,012 \\
\hline
\end{tabular}
\end{center}
\end{table}

\section{Conclusion}\label{conclusion}
We have presented an analysis of loss of independence in conjugate gradient
search directions.  The analysis is derived for strongly
convex functions and is based on work by Nemirovsky and Yudin.  
The analysis suggests a correction method involving subspace optimization
on many iterations.  The dimension of the subspace is at least $4$ and is
bounded above in terms of the log of the current iteration counter.

The correction method, though expensive, appears to lead to the fastest
solution in the case of ill-conditioned instances.  When the correction method
is used, there is seemingly little difference between the three variants
of conjugate gradient, FR, PR+ and HZ that we tested.
Although the method was based on theory developed for the strongly convex
case, convexity is not inherent in the formulas themselves and so it straightforward
to extend the correction to the nonconvex case.
Finally, this work advocated for greater use of computational divided
differences in the optimization community.

This work raises several questions.  On the theoretical side, it would
be interesting to have a method that can be classified as nonlinear conjugate
gradient (i.e., reduces to linear CG when applied to a quadratic
function) but achieves the optimal complexity bound of
$O(|\ln(\epsilon)|\sqrt{L/l})$ function/gradient evaluations in the
general case of strongly convex functions.  Although our CGSO method achieves
this iteration bound, it does not achieve
the same bound for function/gradient evaluations
because we do not have a constant upper bound on the number
of inner iterations needed for subspace optimization.  It would also
be interesting to have some kind of analysis, even a weak result,
of the correction method for nonconvex problems.

On the practical side, it would be interesting to understand why the three
nonlinear CG methods, which often exhibit widely varying behavior, seem
to become nearly indistinguishable once our correction method is applied. 
\newpage
\bibliographystyle{plain}	
\bibliography{CGref}

\end{document}